\numberwithin{equation}{section} \swapnumbers
\newtheorem{satz}{Satz}[section]
\newtheorem{theorem}[satz]{Theorem}
\newtheorem{proposition}[satz]{Proposition}
\newtheorem{lemma}[satz]{Lemma}
\newtheorem{remark}[satz]{Remark}
\newcommand{\bbr}{\mathbb{R}}
\newcommand{\bbe}{\mathbb{E}}
\newcommand{\bbn}{\mathbb{N}}
\newcommand{\bbp}{\mathbb{P}}
\newcommand{\calb}{\mathscr{B}}
\newcommand{\cale}{\mathscr{E}}
\newcommand{\calf}{\mathscr{F}}
\newcommand{\calg}{\mathscr{G}}
\newcommand{\calt}{\mathscr{T}}
\newcommand{\Id}{{\rm Id}}
\begin{document}

\title[Permutation invariant SLLN for exchangeable sequences]{Permutation invariant strong law of large numbers for exchangeable sequences}
\author{Stefan Tappe}
\address{Albert Ludwig University of Freiburg, Department of Mathematical Stochastics, Ernst-Zermelo-Stra\ss{}e 1, D-79104 Freiburg, Germany}
\email{stefan.tappe@math.uni-freiburg.de}
\date{5 November, 2021}
\thanks{I gratefully acknowledge financial support from the Deutsche Forschungsgemeinschaft (DFG, German Research Foundation) -- project number 444121509.}
\begin{abstract}
We provide a permutation invariant version of the strong law of large numbers for exchangeable sequences of random variables. The proof consists of a combination of the Koml\'{o}s-Berkes theorem, the usual strong law of large numbers for exchangeable sequences and de Finetti's theorem.
\end{abstract}
\keywords{exchangeable sequence, strong law of large numbers, permutation invariance, subsequence}
\subjclass[2020]{60F15, 40A05}

\maketitle\thispagestyle{empty}

\section{Introduction}

Kolmogorov's strong law of large numbers (SLLN) for independent and identically distributed (i.i.d.) sequences of random variables has been generalized into several directions. It has, for example, been generalized for pairwise independent, identically distributed random variables in \cite{Etemadi-1981}, for nonnegative random variables in \cite{Etemadi-1983}, for dependent, mixing random variables in \cite{Kuc-2005, Kuc-2011}, and for pairwise uncorrelated random variables in \cite{Janisch}.

There is also a version of the SLLN for exchangeable sequence. More precisely, let $(\xi_n)_{n \in \bbn}$ be an exchangeable sequence of random variables on a probability space $(\Omega,\calf,\bbp)$, let $\cale$ be its exchangeable $\sigma$-algebra, and let $\calt$ be its tail $\sigma$-algebra. If the sequence $(\xi_n)_{n \in \bbn}$ is integrable, then the SLLN for exchangeable sequences tells us that $(\xi_n)_{n \in \bbn}$ is almost surely Ces\`{a}ro convergent; more precisely, we have the following result.

\begin{proposition}\label{prop-SLLN-exch}
Let $(\xi_n)_{n \in \bbn} \subset L^1$ be an exchangeable sequence of integrable random variables. Then $( \xi_n )_{n \in \bbn}$ is $\bbp$-almost surely Ces\`{a}ro convergent to the limit
\begin{align*}
\xi = \bbe[\xi_1 | \cale] = \bbe[\xi_1 | \calt].
\end{align*}
\end{proposition}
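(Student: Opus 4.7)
The plan is to combine de Finetti's representation theorem with a conditional version of Kolmogorov's classical SLLN. The key idea is that conditioning on $\cale$ reduces the problem to the i.i.d.\ case.

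First, by de Finetti's theorem applied to the exchangeable sequence $(\xi_n)_{n\in\bbn}$, there exists a regular conditional distribution such that, given $\cale$, the random variables $(\xi_n)_{n\in\bbn}$ are i.i.d.\ with a (random) common distribution. Since $\xi_1 \in L^1$, one has $\bbe[|\xi_1| \mid \cale] < \infty$ $\bbp$-a.s., so Kolmogorov's SLLN applied $\bbp$-a.s.\ on every realization of the conditioning yields
\begin{align*}
\frac{1}{n}\sum_{k=1}^n \xi_k \longrightarrow \bbe[\xi_1 \mid \cale] \qquad \bbp\text{-almost surely},
\end{align*}
and I denote this limit by $\xi$.

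Next, I would identify $\bbe[\xi_1 \mid \cale]$ with $\bbe[\xi_1 \mid \calt]$. The Ces\`{a}ro limit $\xi$ is unchanged by modification of any finite prefix of $(\xi_n)_{n\in\bbn}$, so $\xi$ is $\calt$-measurable. Moreover, every tail event is invariant under any finite permutation of coordinates, so $\calt \subseteq \cale$. Combining these two facts with the tower property,
\begin{align*}
\xi = \bbe[\xi \mid \calt] = \bbe\big[ \bbe[\xi_1 \mid \cale] \bigm| \calt \big] = \bbe[\xi_1 \mid \calt],
\end{align*}
which establishes the claimed identification.

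The main obstacle is making the conditional SLLN step fully rigorous: one must pick a regular version of the conditional distribution and apply Kolmogorov's theorem on a full-measure subset of conditioning events, then argue that the exceptional $\bbp$-null sets glue together. A technically cleaner alternative bypasses de Finetti entirely: setting $\cale_n$ to be the sub-$\sigma$-algebra of events invariant under permutations of $\{1,\ldots,n\}$ that fix the indices $>n$, exchangeability combined with linearity forces $\bbe[\xi_1 \mid \cale_n] = n^{-1}\sum_{k=1}^n \xi_k$, and since $(\cale_n)_{n\in\bbn}$ is decreasing with $\bigcap_n \cale_n = \cale$, the reverse martingale convergence theorem yields the almost sure convergence to $\bbe[\xi_1 \mid \cale]$ in one stroke; the identification with $\bbe[\xi_1 \mid \calt]$ then proceeds exactly as above.
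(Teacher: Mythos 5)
Your proposal is correct, but note that the paper itself offers no proof of this proposition -- it is labelled as well known and referred to \cite[Example 12.15]{Klenke} and \cite[page 185]{Kingman}. The ``technically cleaner alternative'' you sketch at the end (the identity $\bbe[\xi_1 \mid \cale_n] = n^{-1}\sum_{k=1}^n \xi_k$ followed by reverse martingale convergence along the decreasing filtration $\cale_n \downarrow \cale$) is in fact precisely the argument given in the cited reference, so that route matches the paper's intended proof. Your primary route via de Finetti plus a conditional Kolmogorov SLLN is also viable, and the identification $\bbe[\xi_1\mid\cale]=\bbe[\xi_1\mid\calt]$ via $\calt$-measurability of the Ces\`{a}ro limit, the inclusion $\calt\subseteq\cale$, and the tower property is sound. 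However, two caveats apply to that first route: (i) as you acknowledge, it requires setting up a regular conditional distribution and gluing null sets, which is exactly the overhead the reverse martingale argument avoids; and (ii) there is a latent circularity risk, since the standard textbook proofs of de Finetti's theorem (including the one in \cite{Klenke} that this paper invokes) are themselves built on the reverse martingale convergence underlying this very proposition, so one must take care to use a proof of de Finetti that does not presuppose the SLLN for exchangeable sequences. For these reasons the reverse martingale argument should be regarded as the primary proof and the de Finetti route as a corollary-style alternative.
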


This result is well-known; see, for example \cite[Example 12.15]{Klenke} or \cite[page 185]{Kingman}. The goal of this note is to establish the following permutation invariant version of the SLLN for exchangeable sequences.

\begin{theorem}\label{thm-main}
Let $(\xi_n)_{n \in \bbn} \subset L^1$ be an exchangeable sequence of integrable random variables. We set $\xi := \bbe[\xi_1 | \cale]$. Then the following statements are true:
\begin{enumerate}
\item For every subsequence $(n_k)_{k \in \bbn}$ and every permutation $\pi : \bbn \to \bbn$ the sequence $(\xi_{n_{\pi(k)}})_{k \in \bbn}$ is $\bbp$-almost surely Ces\`{a}ro convergent to $\xi$.

\item For every permutation $\sigma : \bbn \to \bbn$ and every subsequence $(m_k)_{k \in \bbn}$ the sequence $(\xi_{\sigma(m_k)})_{k \in \bbn}$ is $\bbp$-almost surely Ces\`{a}ro convergent to $\xi$.

\item We have $\bbp$-almost surely $\xi = \bbe[\xi_n | \cale] = \bbe[\xi_n | \calt]$ for each $n \in \bbn$.
\end{enumerate}
\end{theorem}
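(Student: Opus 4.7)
The plan is to prove (3) first, since it identifies $\xi$ by a conditional expectation that does not depend on the index $n$, and then to reduce (1) and (2) to a single application of Proposition~\ref{prop-SLLN-exch} combined with de Finetti's theorem.

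I begin with (3). By de Finetti's theorem there exists a random probability measure $\mu$ on $\bbr$ (the directing measure) such that, conditional on $\mu$, the sequence $(\xi_n)_{n \in \bbn}$ is i.i.d.\ with common law $\mu$, and such that $\sigma(\mu)$ coincides, up to $\bbp$-null sets, with both $\cale$ and $\calt$. Since $\xi_1 \in L^1$, the random mean $m(\mu) := \int_{\bbr} x \, \mu(dx)$ is well defined and integrable, and the conditional-i.i.d.\ structure yields $\bbe[\xi_n \mid \cale] = \bbe[\xi_n \mid \calt] = m(\mu)$ for every $n \in \bbn$. Taking $n = 1$ shows that this common value equals $\xi$, which gives (3).

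For (1), fix a subsequence $(n_k)_{k \in \bbn}$ and a permutation $\pi : \bbn \to \bbn$, and set $\eta_k := \xi_{n_{\pi(k)}}$. The indices $n_{\pi(k)}$ are all distinct, so for any distinct $k_1, \dots, k_M$ and any permutation $\tau$ of $\{1, \dots, M\}$ the joint law of $(\eta_{k_1}, \dots, \eta_{k_M})$ is invariant under $\tau$ by exchangeability of $(\xi_n)$; hence $(\eta_k)$ is itself exchangeable. Proposition~\ref{prop-SLLN-exch} applied to $(\eta_k)$ then yields $\bbp$-a.s.\ Ces\`aro convergence to $\bbe[\eta_1 \mid \cale_\eta]$, where $\cale_\eta$ denotes the exchangeable $\sigma$-algebra of $(\eta_k)$. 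To identify this limit with $\xi$, I would use that, conditional on $\mu$, any subsequence and any permutation of an i.i.d.\ sequence is again i.i.d.\ with the same law; thus $(\eta_k)$ is conditionally i.i.d.\ given $\mu$ with law $\mu$, so by uniqueness in de Finetti, $\mu$ is also a directing measure for $(\eta_k)$. The argument of (3) applied to $(\eta_k)$ then gives $\bbe[\eta_1 \mid \cale_\eta] = m(\mu) = \xi$. Part (2) reduces to (1): the set $A := \{\sigma(m_k) : k \in \bbn\}$ consists of distinct positive integers and is infinite, and enumerating $A$ in increasing order as $(n_k)_{k \in \bbn}$ produces a permutation $\pi$ of $\bbn$ with $\sigma(m_k) = n_{\pi(k)}$, so that $(\xi_{\sigma(m_k)})_{k \in \bbn} = (\xi_{n_{\pi(k)}})_{k \in \bbn}$.

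The step I expect to be the main obstacle is the identification in (1): Proposition~\ref{prop-SLLN-exch} applied directly to $(\eta_k)$ only gives a Ces\`aro limit in terms of $\cale_\eta$, a different $\sigma$-algebra from $\cale$, and nothing in the plain statement of Proposition~\ref{prop-SLLN-exch} ties the two together. Everything hinges on the de Finetti observation that the directing measure is preserved under passing to subsequences and permutations, which then forces the a priori different conditional expectations to agree.
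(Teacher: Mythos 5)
Your proposal is correct, but it follows a genuinely different route from the paper. The paper never works with the directing random measure: it invokes the Koml\'{o}s--Berkes theorem to produce one ``universal'' subsequence $(n_k)_{k\in\bbn}$ for which all permuted Ces\`{a}ro limits coincide, and then identifies every other limit with $\xi$ through an equidistribution argument --- the key tool being the elementary fact (Lemma \ref{lemma-cond-exp-2} / Proposition \ref{prop-cond-exp-3}) that an integrable random variable having the same law as its conditional expectation must equal it almost surely. You instead dispense with Koml\'{o}s--Berkes entirely and argue via the strong form of de Finetti's theorem: the directing measure $\mu$, its almost sure uniqueness, the identification of $\sigma(\mu)$ with $\cale$ and $\calt$ up to null sets, and the observation that $\mu$ is preserved under passing to any permuted subsequence, which forces every Ces\`{a}ro limit to equal $m(\mu)=\xi$. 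Your reduction of (2) to (1) by re-enumerating $\{\sigma(m_k)\}$ in increasing order is exactly the content of the paper's Lemma \ref{lemma-permutations}. Your argument is shorter and more conceptual, and you correctly isolate the identification of $\bbe[\eta_1\,|\,\cale_\eta]$ with $\xi$ as the crux; the one caveat is that you are using substantially more than the version of de Finetti's theorem stated in the paper (Theorem \ref{thm-de-Finetti} asserts only conditional i.i.d.-ness given some $\calg$, with no directing measure and no uniqueness claim), so the step ``by uniqueness in de Finetti, $\mu$ is also a directing measure for $(\eta_k)$'' needs either a precise reference (e.g.\ Kallenberg's \emph{Probabilistic Symmetries}, or Klenke's directing-measure version of de Finetti) or a short justification via the almost sure convergence of the empirical measures of $(\eta_k)$ to $\mu$ under the conditional law of large numbers. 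With that reference supplied, the proof is complete; the paper's approach buys self-containedness relative to its stated toolbox at the price of importing Koml\'{o}s--Berkes, while yours buys brevity at the price of a stronger form of de Finetti.
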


Intuitively, the statement of Theorem \ref{thm-main} is plausible. Indeed, de Finetti's theorem, which is stated as Theorem \ref{thm-de-Finetti} below, provides a connection between exchangeable sequences and conditional i.i.d. sequences, and in the present situation it implies that the sequence $(\xi_n)_{n \in \bbn}$ is i.i.d. given $\cale$ or given $\calt$.

Let us briefly indicate the main ideas for the proof of Theorem \ref{thm-main}. Since exchangeability of the sequence is preserved under permutations, by Proposition \ref{prop-SLLN-exch} it follows that the sequences $(\xi_{n_{\pi(k)}})_{k \in \bbn}$ and $(\xi_{\sigma(m_k)})_{k \in \bbn}$ are almost surely Ces\`{a}ro convergent. However, it is not clear whether the limits of these two sequences coincide with $\xi$, because their exchangeable $\sigma$-algebras can be different from $\cale$, and accordingly their tail $\sigma$-algebras can be different from $\calt$. Nevertheless, note that by exchangeability of the sequence all these limits have the same distribution.

In order to overcome the problem regarding the identification of the limits, we use the Koml\'{o}s-Berkes theorem (see \cite{Berkes}), which is stated as Theorem \ref{thm-Komlos-Berkes} below. This result is an extension of Koml\'{o}s's theorem (see \cite{Komlos}); see also \cite[Thm. 5.2.1]{Kabanov-Safarian} for another extension of Koml\'{o}s's theorem. The Koml\'{o}s-Berkes theorem was also used in order to prove the von Weizs\"{a}cker theorem (see \cite{W}); see also \cite[Thm. 5.2.3]{Kabanov-Safarian} for a similar result, and \cite{Tappe-W} for a note on the von Weizs\"{a}cker theorem. 

Coming back to the identification of the limits, the Koml\'{o}s-Berkes theorem provides us with a subsequence $(n_k)_{k \in \bbn}$ such that for every permutation $\pi : \bbn \to \bbn$ the sequence $(\xi_{n_{\pi(k)}})_{k \in \bbn}$ is almost surely Ces\`{a}ro convergent to the same limit. Using this result, in three steps we will show that for every subsequence and every permutation the corresponding sequence is almost surely Ces\`{a}ro convergent to the same limit, and that this limit is given by $\xi$. For the identification of the limits we use results about conditional expectations which are provided in Appendix~\ref{app-cond-exp}.

\section{Proof of the result}\label{sec-exchangeable}

Let $(\Omega,\calf,\bbp)$ be a probability space. We denote by $L^1 = L^1(\Omega,\calf,\bbp)$ the space of all equivalence classes of integrable random variables. Let $(\xi_n)_{n \in \bbn}$ be a sequence of random variables. Furthermore, let $\cale$ be the exchangeable $\sigma$-algebra of the sequence $(\xi_n)_{n \in \bbn}$, and let $\calt$ be the tail $\sigma$-algebra of the sequence $(\xi_n)_{n \in \bbn}$. We assume that the sequence $( \xi_n )_{n \in \bbn}$ is exchangeable; that is, for every finite permutation $\pi : \bbn \to \bbn$ we have
\begin{align*}
\bbp \circ \big( (\xi_n)_{n \in \bbn} \big) = \bbp \circ \big( (\xi_{\pi(n)})_{n \in \bbn} \big),
\end{align*}
or equivalently, for all $k \in \bbn$, all pairwise different $n_1,\ldots,n_k \in \bbn$ and all pairwise different $m_1,\ldots,m_k \in \bbn$ we have
\begin{align*}
\bbp \circ ( \xi_{n_1},\ldots,\xi_{n_k} ) = \bbp \circ (\xi_{m_1},\ldots,\xi_{m_k}).
\end{align*}

\begin{remark}\label{rem-exch}
Note that for every subsequence $(n_k)_{k \in \bbn}$ and every permutation $\pi : \bbn \to \bbn$ the sequence $(\xi_{n_{\pi(k)}})_{k \in \bbn}$ is also exchangeable. Accordingly, for every permutation $\sigma : \bbn \to \bbn$ and every subsequence $(m_k)_{k \in \bbn}$ the sequence $(\xi_{\sigma(m_k)})_{k \in \bbn}$ is also exchangeable.
\end{remark}

\begin{lemma}\label{lemma-permutations}
The following statements are true:
\begin{enumerate}
\item[(1)] For every subsequence $(n_k)_{k \in \bbn}$ and every permutation $\sigma : \bbn \to \bbn$ there exist a permutation $\pi : \bbn \to \bbn$ and a subsequence $(m_k)_{k \in \bbn}$ such that $\sigma(m_k) = n_{\pi(k)}$ for all $k \in \bbn$.

\item[(2)] For every subsequence $(n_k)_{k \in \bbn}$ and every permutation $\pi : \bbn \to \bbn$ there exists a permutation $\sigma : \bbn \to \bbn$ such that $\sigma(n_k) = n_{\pi(k)}$ for all $k \in \bbn$.
\end{enumerate}
\end{lemma}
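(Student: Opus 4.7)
Both parts are essentially set-theoretic bookkeeping, and I would prove them by direct construction.

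For part (1), the plan is to note that since $\sigma$ is a bijection of $\bbn$ and $\{n_k : k \in \bbn\}$ is infinite, the preimage
\begin{align*}
M := \sigma^{-1}\bigl(\{n_k : k \in \bbn\}\bigr) \subset \bbn
\end{align*}
is an infinite subset of $\bbn$. Enumerate $M$ in strictly increasing order as $m_1 < m_2 < \ldots$; this gives the desired subsequence $(m_k)_{k \in \bbn}$. The restriction $\sigma|_M : M \to \{n_k : k \in \bbn\}$ is a bijection, so for each $k \in \bbn$ there is a unique index, which I call $\pi(k)$, such that $\sigma(m_k) = n_{\pi(k)}$. Because $\sigma|_M$ is a bijection and $k \mapsto n_k$ is injective, the map $\pi : \bbn \to \bbn$ is a bijection, i.e., a permutation.

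For part (2), I simply extend the natural map $n_k \mapsto n_{\pi(k)}$ to all of $\bbn$. Set $A := \{n_k : k \in \bbn\}$ and define
\begin{align*}
\sigma(n_k) := n_{\pi(k)} \quad \text{for } k \in \bbn, \qquad \sigma(j) := j \quad \text{for } j \in \bbn \setminus A.
\end{align*}
Since $\pi$ is a permutation of $\bbn$ and $k \mapsto n_k$ is a bijection from $\bbn$ to $A$, the restriction $\sigma|_A$ is a bijection of $A$ onto itself, while $\sigma|_{\bbn \setminus A}$ is the identity. Hence $\sigma$ is a bijection of $\bbn$ with $\sigma(n_k) = n_{\pi(k)}$ for all $k \in \bbn$, as required.

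There is no real obstacle here; the only mild subtlety is keeping straight which set is being permuted and verifying that the induced index map is actually onto, which in (1) uses bijectivity of $\sigma$ and in (2) uses bijectivity of $\pi$.
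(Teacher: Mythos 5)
Your proof is correct and is essentially the paper's own argument viewed from the other direction: the paper defines $\tau(k) := \sigma^{-1}(n_k)$ and sorts its values into increasing order to produce $\pi$ and then $m_k := \tau(\pi(k))$, which is exactly your increasing enumeration of $M = \sigma^{-1}(\{n_k : k \in \bbn\})$ with $\pi$ read off afterwards; part (2) is identical. No changes needed.
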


\begin{proof}
(1) We define the one-to-one map $\tau : \bbn \to \bbn$ as
\begin{align*}
\tau(k) := \sigma^{-1}(n_k) \quad \text{for each $k \in \bbn$.} 
\end{align*}
Then there exists a permutation $\pi : \bbn \to \bbn$ such that $\tau(\pi(k)) < \tau(\pi(k+1))$ for all $k \in \bbn$. Indeed, we define $\pi$ inductively as follows. Let $\pi(1) \in \bbn$ be the unique index such that
\begin{align*}
\tau(\pi(1)) = \min \{ \tau(k) : k \in \bbn \}.
\end{align*}
If $\pi(1),\ldots,\pi(p)$ are already defined for some $p \in \bbn$, then let $\pi(p+1) \in \bbn$ be the unique index such that
\begin{align*}
\tau(\pi(p+1)) = \min \{ \tau(k) : k \in \bbn \setminus \{ \pi(1),\ldots,\pi(p) \} \}.
\end{align*}
Then $\pi$ is a permutation. We define the subsequence $(m_k)_{k \in \bbn}$ as $m_k := \tau(\pi(k))$ for each $k \in \bbn$. Then we have $\sigma(m_k) = n_{\pi(k)}$ for each $k \in \bbn$.

\noindent (2) We define the permutation $\sigma : \bbn \to \bbn$ as
\begin{align*}
\sigma(m) :=
\begin{cases}
n_{\pi(k)}, & \text{if $m = n_k$ for some $k \in \bbn$,}
\\ m, & \text{otherwise.}
\end{cases}
\end{align*}
Then we have $\sigma(n_k) = n_{\pi(k)}$ for all $k \in \bbn$.
\end{proof}

For convenience of the reader, we state the Koml\'{o}s-Berkes theorem and de Finetti's theorem, before we provide the proof of Theorem \ref{thm-main}.

\begin{theorem}[Koml\'{o}s-Berkes theorem]\label{thm-Komlos-Berkes}
Let $(\xi_n)_{n \in \bbn} \subset L^1$ be a sequence of integrable random variables such that $\sup_{n \in \bbn} \bbe[|\xi_n|] < \infty$. Then there exist a subsequence $(n_k)_{k \in \bbn}$ and an integrable random variable $\xi \in L^1$ such that for every permutation $\pi : \bbn \to \bbn$ the sequence $(\xi_{n_{\pi(k)}})_{k \in \bbn}$ is $\bbp$-almost surely Ces\`{a}ro convergent to $\xi$.
\end{theorem}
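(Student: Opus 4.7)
The plan is to extract a subsequence that is ``asymptotically exchangeable'' --- meaning its shifted joint distributions converge to an exchangeable law --- and then to invoke de Finetti's theorem on the limit model to obtain almost sure, permutation-invariant Ces\`{a}ro convergence.

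First, since $\sup_n \bbe[|\xi_n|] < \infty$ does not by itself yield weak compactness in $L^1$, I would truncate: write $\xi_n = \xi_n \bbI_{\{|\xi_n| \le M\}} + \xi_n \bbI_{\{|\xi_n| > M\}}$, let $M_j \to \infty$ along a diagonal, and use Chebyshev together with Borel--Cantelli to arrange that the untruncated tail contributes nothing almost surely to the Ces\`{a}ro averages along the eventual subsequence. This reduces matters to a uniformly bounded setting where $L^2$ methods become available.

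Next, by a diagonal subsequence extraction exploiting weak compactness of probability measures on $\bbr^{\bbn}$ (with tightness secured by the truncation), one obtains a subsequence $(\xi_{n_k})_{k \in \bbn}$ whose shifted joint distributions $\bbp \circ (\xi_{n_k}, \xi_{n_{k+1}}, \ldots)^{-1}$ converge weakly as $k \to \infty$ to an exchangeable law on $\bbr^{\bbn}$, in the spirit of Aldous's subsequence principle. By de Finetti's theorem applied to this limiting process, it is conditionally i.i.d.\ given its exchangeable $\sigma$-algebra with integrable conditional mean $\xi$, and therefore satisfies a permutation-invariant strong law of large numbers with limit $\xi$.

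The final step is to transfer this permutation-invariant convergence from the limit model back to the prelimit sequence $(\xi_{n_k})_{k \in \bbn}$. The strategy is to couple (Skorokhod-style) the shifted tails of $(\xi_{n_k})$ with the exchangeable limit process and, using the uniform integrability obtained from the truncation, to upgrade distributional convergence to almost sure Ces\`{a}ro convergence along every permutation $\pi$. The main obstacle --- and precisely what distinguishes the Berkes strengthening from Koml\'{o}s's original theorem, where only order-preserving subsequences are considered --- lies in this transfer: the speed at which finite-dimensional distributions converge to the exchangeable limit must be made quantitative, by a further diagonal refinement with blockwise $L^2$ estimates, so that almost sure statements descend from the limit model to the prelimit uniformly over the rearranging permutation. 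Overcoming this obstacle is the technical heart of the argument, after which the desired $\bbp$-almost sure Ces\`{a}ro convergence of $(\xi_{n_{\pi(k)}})_{k \in \bbn}$ to $\xi$ for every permutation $\pi$ follows.
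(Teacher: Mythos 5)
The first thing to say is that the paper does not prove this statement at all: Theorem \ref{thm-Komlos-Berkes} is quoted as a known result and its ``proof'' is the single line ``See \cite{Berkes}''. So the only question is whether your sketch would stand on its own as a proof, and it would not. Your outline follows the Aldous--Berkes ``subsequence principle'' philosophy --- extract a subsequence whose shifted tails converge weakly to an exchangeable law, apply de Finetti's theorem to the limit model, then transfer back --- and this is indeed the correct circle of ideas behind Berkes's extension. But as written it is a roadmap, not an argument.

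Two gaps are genuine. First, the truncation step is not as innocuous as you present it: $\sup_{n} \bbe[|\xi_n|] < \infty$ does \emph{not} imply uniform integrability, so the tails $\xi_n \bbI_{\{|\xi_n|>M\}}$ need not be small in $L^1$ uniformly in $n$, and truncating at a fixed level $M$ does not ``reduce matters to a uniformly bounded setting''. The standard device is an index-dependent truncation such as $\xi_{n_k}\bbI_{\{|\xi_{n_k}|\le k\}}$, with Borel--Cantelli justified by finiteness of the first moment of the \emph{limit} distribution along the subsequence; and precisely this device interacts badly with an arbitrary permutation $\pi$, because after rearrangement the truncation level attached to $\xi_{n_{\pi(k)}}$ no longer matches its position $k$ in the Ces\`{a}ro average. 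Second, and more importantly, the transfer from the exchangeable limit model back to the prelimit subsequence --- which you yourself flag as ``the technical heart of the argument'' --- is only described, never carried out. Weak convergence of the shifted finite-dimensional distributions to an exchangeable law yields no almost sure statement about the original random variables by itself, let alone one uniform over all permutations; making this quantitative (coupling the subsequence with a genuinely exchangeable sequence at a summable rate, in the style of Berkes--P\'{e}ter) is essentially the entire content of Berkes's paper. Until that step is supplied, the conclusion does not follow.
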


\begin{proof}
See \cite{Berkes}.
\end{proof}

Let $\calg \subset \calf$ be a sub $\sigma$-algebra. A sequence $(\xi_n)_{n \in \bbn}$ of random variables is called \emph{independent and identically distributed (i.i.d.) given $\calg$} if for every finite subset $I \subset \bbn$ and all Borel sets $B_i \in \calb(\bbr)$, $i \in I$ we have $\bbp$-almost surely
\begin{align*}
\bbp \bigg( \bigcap_{i \in I} \{ \xi_i \in B_i \} \bigg| \calg \bigg) = \prod_{i \in I} \bbp( \xi_i \in B_i | \calg ), \quad \text{(independence given $\calg$)}
\end{align*}
and for all $n,m \in \bbn$ and every Borel set $B \in \calb(\bbr)$ we have $\bbp$-almost surely
\begin{align*}
\bbp( \xi_n \in B | \calg ) = \bbp( \xi_m \in B | \calg ). \quad \text{(identical distributions given $\calg$)}
\end{align*}

\begin{theorem}[De Finetti's theorem]\label{thm-de-Finetti}
Let $(\xi_n)_{n \in \bbn}$ be a sequence of random variables. Then the following statements are equivalent:
\begin{enumerate}
\item[(i)] The sequence $(\xi_n)_{n \in \bbn}$ is exchangeable.

\item[(ii)] There exists a sub $\sigma$-algebra $\calg \subset \calf$ such that $(\xi_n)_{n \in \bbn}$ is i.i.d. given $\calg$.
\end{enumerate}
If the previous conditions are fulfilled, then we can choose $\calg = \cale$ or $\calg = \calt$.
\end{theorem}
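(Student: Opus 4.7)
The plan is to verify the two implications separately, and then show that $\cale$ and $\calt$ are both admissible choices for $\calg$.

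The easy direction (ii) $\Rightarrow$ (i) follows by conditioning: if $(\xi_n)_{n \in \bbn}$ is i.i.d.\ given $\calg$, then for any finite permutation $\pi$ and any cylinder event $\{\xi_{\pi(1)} \in B_1, \ldots, \xi_{\pi(k)} \in B_k\}$, the conditional probability equals $\prod_{j=1}^k \bbp(\xi_{\pi(j)} \in B_j | \calg) = \prod_{j=1}^k \bbp(\xi_1 \in B_j | \calg)$ by the conditional independence and identical-distribution clauses, so after integrating out $\calg$ the joint law depends only on the tuple $(B_1,\ldots,B_k)$ and not on the permutation.

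For (i) $\Rightarrow$ (ii) with $\calg = \cale$, the identical-distribution part is immediate: for $A \in \cale$ and Borel $B$, the transposition swapping indices $n$ and $m$ preserves both $A$ and $\bbp$, hence $\bbp(A \cap \{\xi_n \in B\}) = \bbp(A \cap \{\xi_m \in B\})$. For the conditional independence, I would apply Proposition \ref{prop-SLLN-exch} to the bounded exchangeable sequence $(f(\xi_n))_{n \in \bbn}$ for each bounded Borel $f$ to produce an a.s.\ Cesàro limit $\eta_f$, and then identify $\eta_f = \bbe[f(\xi_1)|\cale]$ a.s.\ by checking, for $A \in \cale$,
\begin{align*}
\bbe[\bbI_A \eta_f] = \lim_{n \to \infty} \frac{1}{n} \sum_{k=1}^n \bbe[\bbI_A f(\xi_k)] = \bbe[\bbI_A f(\xi_1)],
\end{align*}
where the last equality uses the invariance of $A \in \cale$ under transpositions. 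For pairwise conditional independence, consider
\begin{align*}
W_n(f,g) := \frac{1}{n(n-1)} \sum_{\substack{1 \leq i,j \leq n \\ i \neq j}} f(\xi_i) g(\xi_j) = \frac{n}{n-1} \bigg( \frac{1}{n} \sum_{i=1}^n f(\xi_i) \bigg) \bigg( \frac{1}{n} \sum_{j=1}^n g(\xi_j) \bigg) - \frac{1}{n(n-1)} \sum_{i=1}^n f(\xi_i) g(\xi_i),
\end{align*}
which converges a.s.\ to $\bbe[f(\xi_1)|\cale]\, \bbe[g(\xi_1)|\cale]$, while simultaneously $\bbe[\bbI_A W_n] = \bbe[\bbI_A f(\xi_1) g(\xi_2)]$ for $A \in \cale$ by the same invariance argument; equating the limits yields $\bbe[f(\xi_1) g(\xi_2)|\cale] = \bbe[f(\xi_1)|\cale]\, \bbe[g(\xi_1)|\cale]$. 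An analogous $k$-fold average argument extends this to full conditional independence.

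For $\calg = \calt$, the crucial remark is that $\calt \subset \cale$ (a tail event $A \in \sigma(\xi_{N+1}, \xi_{N+2}, \ldots)$ is fixed by any finite permutation that fixes $\{N+1, N+2, \ldots\}$) and that each Cesàro limit $\eta_f$ is $\calt$-measurable, since omitting finitely many terms does not affect a Cesàro limit. Thus $\bbe[f(\xi_1)|\cale] = \eta_f$ is $\calt$-measurable, and the tower property yields $\bbe[f(\xi_1)|\calt] = \bbe[f(\xi_1)|\cale]$, so the conditional i.i.d.\ property transfers verbatim from $\cale$ to $\calt$. The main obstacle I anticipate is the identification $\eta_f = \bbe[f(\xi_1)|\cale]$: a direct invocation of Proposition \ref{prop-SLLN-exch} to $(f(\xi_n))_{n \in \bbn}$ only identifies the limit with the conditional expectation given the (possibly strictly smaller) exchangeable $\sigma$-algebra of $(f(\xi_n))_{n \in \bbn}$, so the upgrade to $\cale$ must be performed by the test-set computation above; the $k$-fold extension is notationally heavy but involves no new idea.
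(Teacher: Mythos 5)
The paper does not prove this theorem at all: it simply cites \cite[Thm.\ 12.24]{Klenke}, so any genuine argument is necessarily a ``different route.'' Your proof is essentially the classical law-of-large-numbers/$U$-statistics proof of de Finetti's theorem, and it is correct. The direction (ii)~$\Rightarrow$~(i) by integrating out $\calg$ is fine, as is the use of transposition invariance of $A \in \cale$ to get identical conditional distributions, and the decomposition of $W_n(f,g)$ is algebraically right (the diagonal term is $O(1/n)$ because $f,g$ are bounded). You correctly spotted the one real subtlety, namely that Proposition~\ref{prop-SLLN-exch} applied to $(f(\xi_n))_{n\in\bbn}$ only identifies the Ces\`aro limit $\eta_f$ relative to the exchangeable $\sigma$-algebra of the \emph{transformed} sequence, and your test-set computation $\bbe[\bbI_A\eta_f]=\bbe[\bbI_A f(\xi_1)]$ for $A\in\cale$, combined with the $\cale$-measurability of $\eta_f$ (which you should state explicitly --- the Ces\`aro $\limsup$ is a permutation-invariant and tail-measurable function of the sequence), does upgrade it to $\eta_f=\bbe[f(\xi_1)\,|\,\cale]$. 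The passage to $\calt$ via $\calt\subset\cale$, $\calt$-measurability of $\eta_f$, and the tower property is also sound. Two small points to tighten: (a) the $k$-fold extension is only asserted, and while it is genuinely routine (the off-diagonal $k$-fold average differs from $\prod_j \bar f_{j,n}$ by $O(1/n)$ terms), it is the actual content of conditional independence and deserves at least the displayed identity; (b) since your argument leans on Proposition~\ref{prop-SLLN-exch}, you should note that this proposition has a proof independent of de Finetti (via backwards martingale convergence, as in \cite[Example 12.15]{Klenke}), otherwise the argument would be circular. Compared with simply citing the literature as the paper does, your approach has the virtue of making the whole note self-contained and of exhibiting explicitly that the conditional distributions given $\cale$ and given $\calt$ coincide, a fact the paper uses repeatedly in Steps 1 and 2 of the proof of Theorem~\ref{thm-main}.
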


\begin{proof}
See, for example \cite[Thm. 12.24]{Klenke}.
\end{proof}

Now, we are ready to provide the proof of Theorem \ref{thm-main}.

\begin{proof}[Proof of Theorem \ref{thm-main}]
By the Koml\'{o}s-Berkes theorem (see Theorem \ref{thm-Komlos-Berkes}) there exist a subsequence $(n_k)_{k \in \bbn}$ and an integrable random variable $\xi \in L^1$ such that for every permutation $\pi : \bbn \to \bbn$ the sequence $(\xi_{n_{\pi(k)}})_{k \in \bbn}$ is $\bbp$-almost surely Ces\`{a}ro convergent to $\xi$. Now, we proceed with the following three steps:

\noindent \emph{Step 1:} First, we show that for every permutation $\sigma : \bbn \to \bbn$ the sequence $(\xi_{\sigma(n)})_{n \in \bbn}$ is $\bbp$-almost surely Ces\`{a}ro convergent to $\xi$. Indeed, by Lemma \ref{lemma-permutations} there exist a permutation $\pi : \bbn \to \bbn$ and a subsequence $(m_k)_{k \in \bbn}$ such that $\sigma(m_k) = n_{\pi(k)}$ for each $k \in \bbn$. By Remark \ref{rem-exch} and Proposition \ref{prop-SLLN-exch} we have 
\begin{align}\label{xi-limit}
\xi = \bbe[\xi_{n_{\pi(1)}} \,|\, \cale_{(n_{\pi(k)})_{k \in \bbn}}] = \bbe[\xi_{n_{\pi(1)}} \,|\, \calt_{(n_{\pi(k)})_{k \in \bbn}}], 
\end{align}
where $\cale_{(n_{\pi(k)})_{k \in \bbn}}$ denotes the exchangeable $\sigma$-algebra of the sequence $(\xi_{n_{\pi(k)}})_{k \in \bbn}$, and $\calt_{(n_{\pi(k)})_{k \in \bbn}}$ denotes the tail $\sigma$-algebra of the sequence $(\xi_{n_{\pi(k)}})_{k \in \bbn}$. Furthermore, by Remark \ref{rem-exch} and Proposition \ref{prop-SLLN-exch} the sequence $(\xi_{\sigma(n)})_{n \in \bbn}$ is $\bbp$-almost surely Ces\`{a}ro convergent to the random variable
\begin{align}\label{eta-limit}
\eta := \bbe[\xi_{\sigma(1)} \,|\, \cale_{(\sigma(n))_{n \in \bbn}}] = \bbe[\xi_{\sigma(1)} \,|\, \calt_{(\sigma(n))_{n \in \bbn}}],
\end{align}
where $\cale_{(\sigma(n))_{n \in \bbn}}$ denotes the exchangeable $\sigma$-algebra of the sequence $(\xi_{\sigma(n)})_{n \in \bbn}$, and $\calt_{(\sigma(n))_{n \in \bbn}}$ denotes the tail $\sigma$-algebra of the sequence $(\xi_{\sigma(n)})_{n \in \bbn}$. By de Finetti's theorem (see Theorem \ref{thm-de-Finetti}) we have
\begin{align*}
\eta = \bbe[\xi_{n_{\pi(1)}} \,|\, \cale_{(\sigma(n))_{n \in \bbn}}] = \bbe[\xi_{n_{\pi(1)}} \,|\, \calt_{(\sigma(n))_{n \in \bbn}}].
\end{align*}
Since $\cale_{(n_{\pi(k)})_{k \in \bbn}} \subset \cale_{(\sigma(n))_{n \in \bbn}}$ and $\calt_{(n_{\pi(k)})_{k \in \bbn}} \subset \calt_{(\sigma(n))_{n \in \bbn}}$, by (\ref{xi-limit}) we obtain
\begin{align*}
\xi = \bbe[ \eta \,|\, \cale_{(n_{\pi(k)})_{k \in \bbn}}] = \bbe[ \eta \,|\, \calt_{(n_{\pi(k)})_{k \in \bbn}}].
\end{align*}
By exchangeability of the sequence $(\xi_n)_{n \in \bbn}$ we have
\begin{align*}
\bbp \circ \bigg( \frac{1}{n} \sum_{i=1}^n \xi_{n_{\pi(i)}} \bigg) = \bbp \circ \bigg( \frac{1}{n} \sum_{i=1}^n \xi_{\sigma(i)} \bigg) \quad \text{for each $n \in \bbn$,} 
\end{align*}
and hence, by Proposition \ref{prop-cond-exp-3} we obtain $\bbp$-almost surely $\xi = \eta$. In particular, if $\sigma = \Id$, then by (\ref{eta-limit}) and de Finetti's theorem (see Theorem \ref{thm-de-Finetti}) we obtain $\bbp$-almost surely
\begin{align*}
\xi = \eta = \bbe[\xi_n | \cale] = \bbe[\xi_n | \calt] \quad \text{for each $n \in \bbn$.}
\end{align*}

\noindent \emph{Step 2:} Now, let $\sigma : \bbn \to \bbn$ be an arbitrary permutation, and let $(m_k)_{k \in \bbn}$ be an arbitrary subsequence. Then the sequence $(\xi_{\sigma(m_k)})_{k \in \bbn}$ is $\bbp$-almost surely Ces\`{a}ro convergent to $\xi$. Indeed, by Step 1 and de Finetti's theorem (see Theorem \ref{thm-de-Finetti}) the sequence $(\xi_{\sigma(n)})_{n \in \bbn}$ is $\bbp$-almost surely Ces\`{a}ro convergent to
\begin{align}\label{xi-limit-2}
\xi = \bbe[\xi_{\sigma(1)} \,|\, \cale_{(\sigma(n))_{n \in \bbn}}] = \bbe[\xi_{\sigma(m_1)} \,|\, \cale_{(\sigma(n))_{n \in \bbn}}].
\end{align}
Furthermore, by Remark \ref{rem-exch} and Proposition \ref{prop-SLLN-exch} the sequence $(\xi_{\sigma(m_k)})_{k \in \bbn}$ is $\bbp$-almost surely Ces\`{a}ro convergent to the random variable
\begin{align*}
\zeta := \bbe[\xi_{\sigma(m_1)} \,|\, \cale_{(\sigma(m_k))_{k \in \bbn}}].
\end{align*}
Since $\cale_{(\sigma(m_k))_{k \in \bbn}} \subset \cale_{(\sigma(n))_{n \in \bbn}}$, by (\ref{xi-limit-2}) we obtain
\begin{align*}
\zeta = \bbe[\xi \,|\, \cale_{(\sigma(m_k))_{k \in \bbn}}].
\end{align*}
By exchangeability of the sequence $(\xi_n)_{n \in \bbn}$ we have
\begin{align*}
\bbp \circ \bigg( \frac{1}{n} \sum_{i=1}^n \xi_{\sigma(i)} \bigg) = \bbp \circ \bigg( \frac{1}{n} \sum_{i=1}^n \xi_{\sigma(m_i)} \bigg) \quad \text{for each $n \in \bbn$,} 
\end{align*}
and hence, by Proposition \ref{prop-cond-exp-3} we obtain $\bbp$-almost surely $\xi = \zeta$. Consequently, the sequence $(\xi_{\sigma(m_k)})_{k \in \bbn}$ is $\bbp$-almost surely Ces\`{a}ro convergent to $\xi$.

\noindent \emph{Step 3:} Now, let $(n_k)_{k \in \bbn}$ be an arbitrary subsequence, and let $\pi : \bbn \to \bbn$ be an arbitrary permutation. By Lemma \ref{lemma-permutations} there exists a permutation $\sigma : \bbn \to \bbn$ such that $\sigma(n_k) = n_{\pi(k)}$ for all $k \in \bbn$. Therefore, by Step 2 the sequence $(\xi_{n_{\pi(k)}})_{k \in \bbn}$ is $\bbp$-almost surely Ces\`{a}ro convergent to $\xi$, which concludes the proof.
\end{proof}

We can extend the statement of Theorem \ref{thm-main} as follows.

\begin{proposition}
Let $(\xi_n)_{n \in \bbn} \subset L^1$ be an exchangeable sequence of integrable random variables. We set $\xi := \bbe[\xi_1 | \cale]$. Then for every subsequence $(n_k)_{k \in \bbn}$ and all permutations $\pi,\sigma : \bbn \to \bbn$ the sequence $(\xi_{\sigma(n_{\pi(k)})})_{k \in \bbn}$ is $\bbp$-almost surely Ces\`{a}ro convergent to $\xi$. Furthermore, we have $\bbp$-almost surely $\xi = \bbe[\xi_n | \cale] = \bbe[\xi_n | \calt]$ for each $n \in \bbn$.
\end{proposition}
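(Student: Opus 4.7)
The plan is to reduce the claim to Theorem \ref{thm-main}(2) via Lemma \ref{lemma-permutations}(2). Given an arbitrary subsequence $(n_k)_{k \in \bbn}$ and arbitrary permutations $\pi, \sigma : \bbn \to \bbn$, I would first apply Lemma \ref{lemma-permutations}(2) to the subsequence $(n_k)_{k \in \bbn}$ and the permutation $\pi$ to obtain a permutation $\tau : \bbn \to \bbn$ such that $\tau(n_k) = n_{\pi(k)}$ for all $k \in \bbn$. Then for every $k \in \bbn$ we have the identity
\begin{align*}
\xi_{\sigma(n_{\pi(k)})} = \xi_{\sigma(\tau(n_k))} = \xi_{(\sigma \circ \tau)(n_k)}.
\end{align*}

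Since $\sigma \circ \tau : \bbn \to \bbn$ is again a permutation and $(n_k)_{k \in \bbn}$ is a subsequence, Theorem \ref{thm-main}(2) (applied with the permutation $\sigma \circ \tau$ and the subsequence $(n_k)_{k \in \bbn}$) immediately yields that $(\xi_{(\sigma \circ \tau)(n_k)})_{k \in \bbn}$ is $\bbp$-almost surely Ces\`{a}ro convergent to $\xi$. This gives the Ces\`{a}ro convergence of $(\xi_{\sigma(n_{\pi(k)})})_{k \in \bbn}$ to $\xi$.

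Finally, the identification $\xi = \bbe[\xi_n | \cale] = \bbe[\xi_n | \calt]$ $\bbp$-almost surely for each $n \in \bbn$ is already contained in Theorem \ref{thm-main}(3) and requires no additional argument. No step here poses a real obstacle; the proposition is essentially a bookkeeping corollary of Theorem \ref{thm-main} once the combinatorial Lemma \ref{lemma-permutations}(2) is used to absorb the permutation $\pi$ acting on the indices of the subsequence into a single permutation acting on $\bbn$.
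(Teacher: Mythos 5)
Your proof is correct and follows exactly the same route as the paper: apply Lemma \ref{lemma-permutations}(2) to absorb $\pi$ into a permutation $\tau$ with $\tau(n_k)=n_{\pi(k)}$, compose with $\sigma$ to get a single permutation, and invoke Theorem \ref{thm-main}(2) and (3). Nothing is missing.
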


\begin{proof}
By Lemma \ref{lemma-permutations} there exists a permutation $\tau : \bbn \to \bbn$ such that $\tau(n_k) = n_{\pi(k)}$ for all $k \in \bbn$. The mapping $\rho : \bbn \to \bbn$ given by $\rho := \sigma \circ \tau$ is also a permutation, and we have $\sigma(n_{\pi(k)}) = \rho(n_k)$ for all $k \in \bbn$. Therefore, applying Theorem \ref{thm-main} concludes the proof.
\end{proof}

We conclude this section with the following consequence regarding Koml\'{o}s's theorem for exchangeable sequences. Namely, let $(\xi_n)_{n \in \bbn} \subset L^1$ be an exchangeable sequence of random variables. Then Theorem \ref{thm-main} shows that both extensions of Koml\'{o}s's theorem (the Koml\'{o}s-Berkes theorem from \cite{Berkes}, which we have stated as Theorem \ref{thm-Komlos-Berkes}, and \cite[Thm. 5.2.1]{Kabanov-Safarian}) are true with the original sequence $(\xi_n)_{n \in \bbn}$; that is, we do not have to pass to a subsequence $(\xi_{n_k})_{k \in \bbn}$.

\begin{appendix}

\section{Results about conditional expectations}\label{app-cond-exp}

We require the following results about conditional expectations. Since these results were not immediately available in the literature, we provide the proofs. For what follows, let $\calg \subset \calf$ be a sub $\sigma$-algebra.

\begin{lemma}\label{lemma-cond-exp-1}
Let $X \in L^2$ be a square-integrable random variable such that $\bbp \circ X = \bbp \circ \bbe[X | \calg]$. Then we have $\bbp$-almost surely $X = \bbe[X | \calg]$.
\end{lemma}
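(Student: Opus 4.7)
The plan is to work in $L^2$ and exploit the orthogonal projection property of the conditional expectation. Set $Y := \bbe[X \mid \calg]$. Since $X \in L^2$, we also have $Y \in L^2$ by Jensen's inequality, and $Y$ is $\calg$-measurable. The distributional equality $\bbp \circ X = \bbp \circ Y$ immediately yields $\bbe[X^2] = \bbe[Y^2]$, both of which are finite.

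The key identity to invoke is the orthogonality of $L^2$-projections: from the tower property we have $\bbe[XY] = \bbe[\bbe[XY \mid \calg]] = \bbe[Y \cdot \bbe[X \mid \calg]] = \bbe[Y^2]$, since $Y$ is $\calg$-measurable. Expanding the square, this gives
\begin{align*}
\bbe[(X - Y)^2] = \bbe[X^2] - 2\bbe[XY] + \bbe[Y^2] = \bbe[X^2] - \bbe[Y^2].
\end{align*}
By the distributional equality above, the right-hand side vanishes, so $\bbe[(X - Y)^2] = 0$, and hence $X = Y$ $\bbp$-almost surely, which is the desired conclusion.

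There is no real obstacle here; the only point worth noting is that everything takes place in $L^2$, so one must justify $Y \in L^2$ (via the conditional Jensen inequality $\bbe[Y^2] \leq \bbe[X^2]$) before manipulating the squares. The argument breaks naturally into two short steps: (i) record the two equalities $\bbe[X^2] = \bbe[Y^2]$ (from equal distributions) and $\bbe[XY] = \bbe[Y^2]$ (from the tower property and $\calg$-measurability of $Y$); (ii) combine them through the expansion of $\bbe[(X-Y)^2]$.
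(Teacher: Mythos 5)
Your proof is correct and follows essentially the same route as the paper: set $Y := \bbe[X \mid \calg]$, use the tower property to get $\bbe[XY] = \bbe[Y^2]$, and combine with $\bbe[X^2] = \bbe[Y^2]$ (from the equality of distributions) to conclude $\bbe[(X-Y)^2] = 0$. The additional remark that $Y \in L^2$ by conditional Jensen is a sensible explicit justification that the paper leaves implicit.
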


\begin{proof}
Setting $Y := \bbe[X | \calg]$, we have $\bbe[X^2] = \bbe[Y^2]$, and hence
\begin{align*}
\bbe[(X-Y)^2] &= \bbe[X^2] - 2 \bbe[XY] + \bbe[Y^2] = 2 \bbe[X^2] - 2 \bbe[\bbe[XY|\calg]]
\\ &= 2 \bbe[X^2] - 2 \bbe[Y \bbe[X|\calg]] = 2 \bbe[X^2] - 2 \bbe[Y^2] = 0,
\end{align*}
completing the proof.
\end{proof}

\begin{lemma}\label{lemma-Jensen-2}
Let $X \in L^1$ be a nonnegative random variable, and let $\varphi : \bbr^+ \to \bbr^+$ be a concave function such that $\bbp$-almost surely
\begin{align}\label{E-phi-equal}
\bbe[\varphi(X)] = \bbe[\varphi(\bbe[X | \calg])]. 
\end{align}
Then we have $\bbp$-almost surely
\begin{align}\label{E-phi-equal-2}
\bbe[\varphi(X) | \calg] = \varphi(\bbe[X | \calg]).
\end{align}
\end{lemma}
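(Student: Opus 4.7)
The approach is to apply the conditional Jensen inequality to the concave function $\varphi$ and then exploit the fact that a nonnegative random variable with vanishing expectation vanishes $\bbp$-almost surely.

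Concretely, the conditional Jensen inequality for concave functions states that, whenever both sides are integrable,
$$\bbe[\varphi(X) \,|\, \calg] \leq \varphi(\bbe[X \,|\, \calg]) \quad \bbp\text{-a.s.}$$
Before invoking it I would check integrability. Since $\varphi : \bbr^+ \to \bbr^+$ is concave, its graph lies below any supporting affine function; choosing such a support line at, say, $x_0 = 1$, together with the nonnegativity of $\varphi$, furnishes constants $a, b \geq 0$ such that $\varphi(x) \leq a + b x$ for all $x \geq 0$. Because $X \in L^1$ is nonnegative, this gives $\varphi(X), \varphi(\bbe[X \,|\, \calg]) \in L^1$, so the conditional Jensen inequality is genuinely applicable.

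The key step is then to take expectations of the above inequality. By the tower property $\bbe[\bbe[\varphi(X) \,|\, \calg]] = \bbe[\varphi(X)]$, and combining with the hypothesis (\ref{E-phi-equal}) we obtain
$$\bbe\bigl[ \varphi(\bbe[X \,|\, \calg]) - \bbe[\varphi(X) \,|\, \calg] \bigr] = \bbe[\varphi(\bbe[X \,|\, \calg])] - \bbe[\varphi(X)] = 0.$$
The integrand is $\bbp$-almost surely nonnegative (by the very inequality we just integrated), so it must vanish $\bbp$-almost surely, which is exactly (\ref{E-phi-equal-2}). The main obstacle, if there is one, is the bookkeeping around integrability of $\varphi(X)$ and the precise statement of conditional Jensen for concave $\varphi$; once those are in place, the remainder is the routine ``integrate a nonnegative quantity with zero mean'' argument.
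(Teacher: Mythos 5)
Your proof is correct and follows essentially the same route as the paper: conditional Jensen for concave $\varphi$, followed by the observation that the nonnegative difference $\varphi(\bbe[X|\calg]) - \bbe[\varphi(X)|\calg]$ has zero expectation and hence vanishes almost surely (the paper phrases this as a contradiction, you argue directly). Your explicit check that $\varphi(x) \leq a + bx$ guarantees integrability is a welcome addition that the paper leaves implicit.
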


\begin{proof}
By Jensen's inequality for concave functions and conditional expectations we have $\bbp$-almost surely
\begin{align*}
\bbe[\varphi(X) | \calg] \leq \varphi(\bbe[X | \calg]).
\end{align*}
Suppose that (\ref{E-phi-equal-2}) does not hold true. Then we have $\bbp$-almost surely
\begin{align*}
\varphi(\bbe[X | \calg]) - \bbe[\varphi(X) | \calg] \in L_+^0 \setminus \{ 0 \},
\end{align*}
where $L_+^0$ denotes the convex cone of all equivalence classes of nonnegative random variables. Hence, we obtain $\bbp$-almost surely
\begin{align*}
\bbe[\varphi(X)] = \bbe[\bbe[\varphi(X) | \calg]] < \bbe [ \varphi(\bbe[X | \calg]) ],
\end{align*}
which contradicts (\ref{E-phi-equal}).
\end{proof}

\begin{lemma}\label{lemma-cond-exp-2}
Let $X \in L^1$ be an integrable random variable such that
\begin{align}\label{same-distr}
\bbp \circ X = \bbp \circ \bbe[X | \calg]. 
\end{align}
Then we have $\bbp$-almost surely $X = \bbe[X | \calg]$.
\end{lemma}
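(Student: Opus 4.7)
My plan is to combine the previous two lemmas by first reducing to the case $X\geq 0$ and then composing with a bounded, strictly concave, strictly increasing function on $\bbr^+$, so that Lemma~\ref{lemma-Jensen-2} supplies the pointwise identity needed for Lemma~\ref{lemma-cond-exp-1} to close the argument in $L^2$. Throughout, I set $Y:=\bbe[X\mid\calg]$, and as the workhorse function I take $\varphi:\bbr^+\to\bbr^+$, $\varphi(x):=1-e^{-x}$, which is strictly concave, strictly increasing, and bounded by $1$.

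For the reduction to the nonnegative case, I would note that $x\mapsto x^+$ is a Borel function, so the hypothesis $\bbp\circ X=\bbp\circ Y$ yields $\bbp\circ X^+=\bbp\circ Y^+$, and conditional Jensen applied to the convex function $x\mapsto x^+$ gives $\bbe[X^+\mid\calg]\geq Y^+$ $\bbp$-almost surely. Since $\bbe[\bbe[X^+\mid\calg]]=\bbe[X^+]=\bbe[Y^+]$, this inequality is in fact an equality $\bbp$-almost surely, i.e.\ $\bbe[X^+\mid\calg]=Y^+$. Hence $\bbp\circ X^+=\bbp\circ\bbe[X^+\mid\calg]$, and the analogous statement holds for $X^-$. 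Once the lemma is proved for nonnegative variables, applying it to $X^{\pm}$ yields $X^{\pm}=Y^{\pm}$ $\bbp$-a.s., whence $X=X^+-X^-=Y^+-Y^-=Y$ $\bbp$-a.s.

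For the nonnegative case, $\varphi(X)\in L^{\infty}\subset L^2$, and $\bbp\circ X=\bbp\circ Y$ gives $\bbe[\varphi(X)]=\bbe[\varphi(Y)]$, so Lemma~\ref{lemma-Jensen-2} applies and yields $\bbe[\varphi(X)\mid\calg]=\varphi(Y)$ $\bbp$-a.s. Pushing forward $\bbp\circ X=\bbp\circ Y$ under $\varphi$ gives $\bbp\circ\varphi(X)=\bbp\circ\varphi(Y)=\bbp\circ\bbe[\varphi(X)\mid\calg]$, so Lemma~\ref{lemma-cond-exp-1} applied to $\varphi(X)\in L^2$ yields $\varphi(X)=\bbe[\varphi(X)\mid\calg]=\varphi(Y)$ $\bbp$-a.s., and strict monotonicity of $\varphi$ then forces $X=Y$ $\bbp$-a.s.

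The main obstacle I anticipate is the reduction step: it is not immediately obvious from the distributional hypothesis on $X$ that $X^+$ also satisfies the hypothesis of the lemma, and the clean identity $\bbe[X^+\mid\calg]=Y^+$ relies crucially on combining conditional Jensen with the distributional equality to force equality-in-expectation in Jensen's inequality. Once this reduction is in place, the composition trick with a bounded, strictly concave, strictly monotone $\varphi$ is what converts the $L^1$ statement into the $L^2$ statement already handled by Lemma~\ref{lemma-cond-exp-1}.
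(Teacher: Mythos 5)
Your proof is correct, and it follows the same overall architecture as the paper's: reduce to nonnegative $X$, then combine Lemma~\ref{lemma-Jensen-2} with Lemma~\ref{lemma-cond-exp-1}. Two points of genuine divergence are worth recording. First, in the nonnegative case the paper takes the concave functions $\varphi_n(x) = x \wedge n$, obtains $X \wedge n = \bbe[X \wedge n \,|\, \calg] = \bbe[X\,|\,\calg] \wedge n$ for each $n$, and lets $n \to \infty$; your single bounded, strictly increasing, strictly concave $\varphi(x) = 1 - e^{-x}$ does the whole job in one stroke, trading the limiting argument for an appeal to injectivity of $\varphi$. Both work; yours is marginally slicker, while the paper's truncation has the small advantage that $\varphi_n$ converges pointwise to the identity, so no inversion step is needed. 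Second, and more substantively: in the reduction to the nonnegative case the paper simply asserts the identities $\bbe[X\,|\,\calg]^{\pm} = \bbe[X^{\pm}\,|\,\calg]$, which are false for a general integrable $X$ (take $X$ symmetric and $\calg$ trivial); they hold here only because of the hypothesis \eqref{same-distr}. Your derivation --- conditional Jensen gives $\bbe[X^+\,|\,\calg] \geq \bbe[X\,|\,\calg]^+$, and the distributional hypothesis forces the two sides to have equal expectations, hence to coincide almost surely --- is exactly the justification the paper leaves implicit, and including it makes your write-up the more complete of the two.
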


\begin{proof}
First, we assume that $X \in L^1$ is nonnegative. Let $n \in \bbn$ be arbitrary. By (\ref{same-distr}) and Lemma \ref{lemma-Jensen-2} we have $\bbp$-almost surely
\begin{align}\label{cond-exp-concave}
\bbe[X | \calg] \wedge n = \bbe[ X \wedge n | \calg ].
\end{align}
Therefore, by taking into account (\ref{same-distr}) we have 
\begin{align*}
\bbp \circ (X \wedge n) = \bbp \circ ( \bbe[X | \calg] \wedge n ) = \bbp \circ \bbe[X \wedge n | \calg]. 
\end{align*}
Since $X \wedge n \in L^2$, by Lemma \ref{lemma-cond-exp-1} and (\ref{cond-exp-concave}) we deduce that $\bbp$-almost surely
\begin{align*}
X \wedge n = \bbe[ X \wedge n | \calg ] = \bbe[X | \calg] \wedge n.
\end{align*}
Since $n \in \bbn$ was arbitrary, it follows that $\bbp$-almost surely $X=\bbe[X | \calg]$.

Now, let $X \in L^1$ be arbitrary. Since
\begin{align*}
\bbe[X|\calg]^+ = \bbe[X^+|\calg] \quad \text{and} \quad \bbe[X|\calg]^- = \bbe[X^-|\calg],
\end{align*}
by (\ref{same-distr}) we have
\begin{align*}
\bbp \circ X^+ = \bbp \circ \bbe[X^+ | \calg] \quad \text{and} \quad \bbp \circ X^- = \bbp \circ \bbe[X^- | \calg].
\end{align*}
By the first part of the proof, we deduce that $\bbp$-almost surely $X^+ = \bbe[X^+ | \calg]$ and $X^- = \bbe[X^- | \calg]$, and hence $X = \bbe[X | \calg]$.
\end{proof}

\begin{proposition}\label{prop-cond-exp-3}
Let $(X_n)_{n \in \bbn}$ and $(Y_n)_{n \in \bbn}$ be sequences of random variables, and let $X \in L^1$ be an integrable random variable. We assume that $\bbp \circ X_n = \bbp \circ Y_n$ for each $n \in \bbn$, and that $X_n \overset{\text{a.s.}}{\to} X$ and $Y_n \overset{\text{a.s.}}{\to} \bbe[X|\calg]$ as $n \to \infty$. Then we have $\bbp$-almost surely $X = \bbe[X | \calg]$.
\end{proposition}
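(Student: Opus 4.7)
The plan is to reduce Proposition \ref{prop-cond-exp-3} to the already established Lemma \ref{lemma-cond-exp-2}, whose hypothesis is equality in distribution $\bbp \circ X = \bbp \circ \bbe[X|\calg]$. So the only thing to verify is that this distributional identity follows from the given hypotheses.

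First, I would observe that almost sure convergence implies convergence in distribution: hence $X_n \to X$ in distribution and $Y_n \to \bbe[X|\calg]$ in distribution. Since by assumption $\bbp \circ X_n = \bbp \circ Y_n$ for every $n \in \bbn$, the two sequences of distributions coincide pointwise, and therefore share the same weak limit. Since weak limits are unique as measures, we conclude $\bbp \circ X = \bbp \circ \bbe[X|\calg]$.

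With the distributional identity in hand, the integrability assumption $X \in L^1$ ensures that $\bbe[X|\calg]$ is well-defined and integrable, so Lemma \ref{lemma-cond-exp-2} applies directly and yields $X = \bbe[X|\calg]$ $\bbp$-almost surely, finishing the argument.

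The only point requiring any real care is the passage from equality of distributions along the sequence to equality of distributions of the limits; but this is a standard fact about weak convergence (e.g. via the portmanteau theorem applied to bounded continuous test functions, where $\bbe[f(X_n)] = \bbe[f(Y_n)]$ for each $n$ passes to $\bbe[f(X)] = \bbe[f(\bbe[X|\calg])]$ in the limit). There is no genuine obstacle, since the substantive work is already packaged inside Lemma \ref{lemma-cond-exp-2}.
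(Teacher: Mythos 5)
Your proposal is correct and follows exactly the paper's route: the paper's own proof simply says ``Noting that $\bbp \circ X = \bbp \circ \bbe[X|\calg]$, this is a consequence of Lemma \ref{lemma-cond-exp-2}.'' You merely spell out the (standard and correct) justification for that distributional identity via almost sure convergence implying convergence in distribution.
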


\begin{proof}
Noting that $\bbp \circ X = \bbp \circ \bbe[X | \calg]$, this is a consequence of Lemma \ref{lemma-cond-exp-2}.
\end{proof}

\end{appendix}

\end{document}